\newtheorem{thm}{Theorem}[section]
\newtheorem{lem}[thm]{Lemma}
\theoremstyle{definition}
\newtheorem{defn}[thm]{Definition}
\theoremstyle{remark}
\numberwithin{equation}{section}
\theoremstyle{plain}
\begin{document}
\title{CONTRACTIVITY VS. COMPLETE CONTRACTIVITY VIA PROPERTY P}
\keywords{Complete Contractivity, Property P, Two Summing Property, Operator Space}

\thanks{The named author is supported by Council for Scientific and Industrial Research, MHRD, Government of India.}
%
\author{Samya Kumar Ray}
%

\address{Samya Kumar Ray: Department of Mathematics and Statistics, Indian Institute of Technology, Kanpur-208016}
\email{samya@math.iitk.ac.in}

\pagestyle{headings}

\begin{abstract}In this paper, we consider the question of contractivity vs. complete contractivity for domains in $\mathbb{C}^2$, which are unit balls with respect to some norm. We show that for a large class of Reinhardt domains, the corresponding Banach spaces do not have Property P, which implies that there exists contractive homomorphisms on these domains which are not completely contractive. At the end, we present a simple proof of the fact that the complex Banach spaces $(\mathbb{C}^2,\|\cdot\|_{\infty})$ and $(\mathbb{C}^3,\|\cdot\|_{\infty})$ have Property P.
\end{abstract}

\maketitle
\section{Introduction}
Let $\Omega\subseteq\mathbb{C}^m$ be a bounded domain, and $\mathbb{C}^{p\times q}$ be the Banach space of $p\times q$ complex matrices endowed with the operator norm. Let $\mathcal{O}(\Omega)$ denote the algebra of functions, each of which is holomorphic on some open set containing the closed set $\overline{\Omega}$. For any $w\in\Omega,$ and matrices $A_1,\dots,A_m$ in $\mathbb{C}^{p\times q}$, define $\langle\Delta f(w),\textbf{A}\rangle:=\:\sum_{j=1}^mA_j\frac{\partial f}{\partial z_j}(w),$ where $ f\in\mathcal{O}(\Omega)$ and $\textbf{A}:=(A_1,...,A_m)$. The map defined as
\begin{equation*} 
\Phi_{(w,\textbf{A})}(f):=
\begin{pmatrix}
f(w)I_p & \langle\Delta f(w),\textbf{A}\rangle\\
0 & f(w)I_q\\
\end{pmatrix},
\end{equation*} for all $f$ in $\mathcal{O}(\Omega),$ is clearly a continuous unital algebra homomorphism from $(\mathcal{O}(\Omega),\| \cdot \|_{\infty})$ to $(\mathbb{C}^{p+q}, \| \cdot \|_{op})$. We call such a homomorphism a Parrot Like Homomorphism. Now given a Parrot Like Homomorphism, we associate a natural map $$\Phi_{(w,\textbf{A})}\otimes I_k:\mathcal{O}(\Omega)\otimes\mathbb{C}^{k}\rightarrow (\mathbb{C}^{pk\times qk},\|\cdot\|_{op}),$$ where for $F\in\mathcal{O}(\Omega)\otimes\mathbb{C}^{k}$, the norm is defined as $\|F\|:=\sup\{\|F(z)\|_{op}:z\in\Omega\}$. The map $\Phi_{(w,\textbf{A})}$ is called contractive if $\|\Phi_{(w,\textbf{A})}\|_{op}\leq 1$ and is called completely contractive if $\sup_k{\|\Phi_{(w,\textbf{A})}\otimes I_k\|_{op}}\leq 1$. It is an open problem (see \cite{PV1}, \cite{PV2}) to determine domains for which there exists a contractive homomorphism which is not completely contractive.

In this paper, we answer this question for domains of the form $\{(z_1,z_2):|z_1|^p+|z_2|^q<1\}$, where $p,q$ are bigger than or equal to one and one of them is strictly bigger than one. At the end, we produce a simple proof of the fact that the Banach spaces $(\mathbb{C}^2,\|\cdot\|_{\infty})$ and $(\mathbb{C}^3,\|\cdot\|_{\infty})$ have Property P.

Our main theorem is the following.
\begin{thm}\label{0} Suppose $\Omega$ is a unit ball with respect to some norm in $\mathbb{C}^2$ which is Reinhardt, then $(\mathbb{C}^2,\|\cdot\|_{\Omega})$ has Property P if and only if $(\mathbb{R}^2,\|\cdot\|_{|\Omega|})$ has Property P.
\end{thm}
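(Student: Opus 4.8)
The plan is to exploit the torus symmetry forced by the Reinhardt hypothesis, reducing the (complex) Property~P condition for $(\C^2,\|\cdot\|_\Omega)$ to the (real) Property~P condition for $(\R^2,\|\cdot\|_{|\Omega|})$. Since $\Omega$ is the unit ball of $\|\cdot\|_\Omega$ and is Reinhardt, every rotation $R_\theta\colon(z_1,z_2)\mapsto(e^{i\theta_1}z_1,e^{i\theta_2}z_2)$, $\theta\in\T^2$, is a surjective complex-linear isometry of $(\C^2,\|\cdot\|_\Omega)$. Consequently $\|(z_1,z_2)\|_\Omega=\psi(|z_1|,|z_2|)$ for a single profile function $\psi$ on the nonnegative quadrant, and the modulus norm is its even extension $\|(x_1,x_2)\|_{|\Omega|}=\psi(|x_1|,|x_2|)$. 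The first step I would carry out is to check that $\|\cdot\|_{|\Omega|}$ is a bona fide norm, i.e.\ that $|\Omega|$ is convex and balanced; this follows from convexity together with the circular symmetry of $\Omega$, and it makes the right-hand side of the statement meaningful.

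Next I would record that Property~P, tested through the contractivity and complete contractivity of the Parrot-like homomorphisms defined above, is an invariant of the isometric structure: if $U$ is a surjective linear isometry of the space, then the homomorphism attached to $(w,\mathbf A)$ is carried to the one attached to $(Uw,U\mathbf A)$ together with all of its complete-contractivity data, so $U$ preserves Property~P. The heart of the proof is then the reduction. Given any configuration $(w,\mathbf A)$ witnessing Property~P or its failure for $(\C^2,\|\cdot\|_\Omega)$, I would apply the rotation $R_\theta$ with $\theta=-\arg w$ to move $w$ into the real slice $\R^2\cap\Omega$, absorbing the residual phases of the gradient data $\langle\Delta f(w),\mathbf A\rangle$ into the free matrix parameters $A_1,A_2$. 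Because $\|\cdot\|_\Omega$ and all amplified norms $\|(R_\theta\otimes I_k)(\cdot)\|_{op}$ are invariant, the governing contractivity and complete-contractivity inequalities are unchanged, and after this normalization they are exactly those defining Property~P for the profile $\psi$, that is, for $(\R^2,\|\cdot\|_{|\Omega|})$.

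For the reverse implication I would embed a real configuration for $(\R^2,\|\cdot\|_{|\Omega|})$ back into $\C^2$: a real base point and real tangent data have identical $\psi$-values when regarded in the complex space, so a failure of Property~P downstairs lifts to a failure upstairs, and combined with the previous paragraph this yields the stated equivalence in both directions.

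The step I expect to be the main obstacle is the complete-contractivity half, not mere contractivity. The complex tangent space at $w$ is strictly larger than the real one, and the supremum defining the completely contractive norm ranges over all amplifications $\Phi_{(w,\mathbf A)}\otimes I_k$; I must therefore rule out obstructions coming from the imaginary tangent directions and from arbitrarily large $k$ that the real slice cannot see. Concretely, the work is to show that after the torus normalization the extremizers can be taken with circular symmetry---here I expect to invoke that on a Reinhardt domain the extremal competitors among $\|\cdot\|_\infty$-bounded holomorphic functions may be chosen to be monomials---so that the phases factor out uniformly in $k$ and the $k$-fold amplified inequalities for $\Omega$ collapse onto those for $|\Omega|$. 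Establishing this uniform-in-$k$ collapse is where the Reinhardt structure does the decisive work.
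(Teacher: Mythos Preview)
Your proposal works with the wrong object. In this paper, Property~P is \emph{defined} via the numerical constant
\[
\gamma(X)=\sup\{\langle A,B\rangle: A\ge 0,\ B\ge 0,\ \|A\|_{X\to X^*}\le 1,\ \|B\|_{X^*\to X}\le 1\},
\]
and the theorem is the assertion $\gamma((\C^2,\|\cdot\|_\Omega))=1\iff\gamma((\R^2,\|\cdot\|_{|\Omega|}))=1$. The paper's proof is a direct computation: for a $2\times 2$ positive semidefinite $A$ one shows, using the Reinhardt symmetry, that $\|A\|_{\Omega\to\Omega^*}=\||A|\|_{\Omega\to\Omega^*}=\||A|\|_{|\Omega|\to|\Omega|^*}$ (Lemmas~\ref{1}--\ref{6}), and since $\langle A,B\rangle\le\langle|A|,|B|\rangle$ with equality achievable, the two $\gamma$'s coincide. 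No holomorphic functions, no amplifications, no Parrot data enter.

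You instead try to test Property~P through the Parrot-like homomorphisms $\Phi_{(w,\mathbf A)}$ and their amplifications. This creates two genuine gaps. First, the target space $(\R^2,\|\cdot\|_{|\Omega|})$ is a \emph{real} Banach space; the Parrot-like setup uses holomorphic functions on a complex domain, so your framework has no meaning on the $|\Omega|$ side, and you never say what ``Property~P for $(\R^2,\|\cdot\|_{|\Omega|})$'' is supposed to be in your language. The equivalence between $\gamma(X)=1$ and the contractive/completely contractive dichotomy goes through the two-summing property via results of \cite{BG}; invoking it would make the argument circular relative to what the theorem is actually claiming. Second, even granting that equivalence, your torus normalization only makes the base point $w$ real. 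The matrices $A_1,A_2\in\C^{p\times q}$ remain genuinely complex after absorbing scalar phases, so the configuration does not land in any ``real slice'', and the inequality you must control still involves the full complex tangent data. You acknowledge this yourself (``the complex tangent space at $w$ is strictly larger than the real one'') and offer only the heuristic that extremizers ``may be chosen to be monomials'', uniformly in $k$; that is precisely the step that would need proof, and nothing in the proposal indicates how to get it. The paper avoids all of this by never leaving the $2\times 2$ positive-matrix formulation of $\gamma(X)$.
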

Given a complex Banach space $(\mathbb{C}^2,\|\cdot\|_{\Omega})$, where the unit ball $\Omega$ is Reinhardt, we define a corresponding real two dimensional Banach space $(\mathbb{R}^2,\|\cdot\|_{|\Omega|})$ with norm defined as $\|(x,y)\|_{|\Omega|}=\|(x,y)\|_{\Omega},$ for $(x,y)\in\mathbb{R}^2$. The unit ball of $(\mathbb{R}^2,\|\cdot\|_{|\Omega|})$ is denoted by $|\Omega|$.

For any two Banach spaces $E$, $F$ and a linear map $A:E\to F$, the operator norm of $A$ is denoted by $\|A\|_{E\to F}$. Often, we use $\|A\|_{op}$ to denote the operator norm, when the underlying Banach spaces on which $A$ acts are well understood.

For any complex number $z$, we denote the argument of $z$ by $\text{arg}z$. Similarly, for any real number $x$, we denote $\text{sgn}x$ to be the sign of $x$. 

The Banach space $\mathbb{C}^n$ with the usual supremum norm is denoted by $(\mathbb{C}^n,\|\cdot\|_{\infty})$. Also, by $(\mathbb{C}^n,\|\cdot\|_{1})$, we mean the Banach space with the norm given by
$\|(z_1,\dots,z_n)\|_1=\sum_{j=1}^n|z_j|$. We denote the corresponding real Banach spaces by similar notations.

The Banach space of all complex square summable sequences is denoted by $l_2$, which is known to be a Hilbert space equipped with the inner product $\langle \alpha,\beta\rangle=\sum_{i=1}^\infty\alpha_i\beta_i$, for all $\alpha=(\alpha_i)$ and $\beta=(\beta_i)$ in $l_2$.

Given any real (complex) matrix $A$, we write $A\geq 0$, whenever it is positive semidefinite. For a complex matrix $A\geq 0$, we define $|A|=\begin{pmatrix}
a_{11} & |a_{12}|\\
|a_{12}| & a_{22}\\
\end{pmatrix}$, where $A=\begin{pmatrix}
a_{11} & a_{12}\\
\overline{a_{12}} & a_{22}\\
\end{pmatrix}$. Clearly, $|A|\geq 0$ if and only if $A\geq 0$. Similarly, for a real matrix $A\geq 0$, we define $|A|=\begin{pmatrix}
a_{11} & |a_{12}|\\
|a_{12}| & a_{22}\\
\end{pmatrix}$, where $A=\begin{pmatrix}
a_{11} & a_{12}\\
a_{12} & a_{22}\\
\end{pmatrix}$.

Let $X$ be a Banach space, we associate a numerical constant $\gamma(X)$ as follows
\begin{equation}\label{-1}
\gamma(X):=\sup\{\langle A,B\rangle:A\geq O,B\geq O, \ \|A\|_{X\to X^*},\|B\|_{X^*\to X}\leq 1\},
\end{equation}
where the inner product in \eqref{-1} is the Hilbert-Schimdt inner product.
\begin{defn}[Property P]A Banach space $X$ is said to have Property P if and only if $\gamma(X)=1$.
\end{defn}

It has been proved in \cite{BG} (which was originally observed by Pisier), that Property P is actually equivalent to Two Summing Property. For more about Two Summing Property, the author recommends the reader \cite{AA} and \cite{PG3}.
\begin{defn}[Correlation Matrix]
A complex positive semidefinite matrix with all its diagonal elements equal to one is called a Correlation matrix. We denote the set of all $n\times n$ Correlation matrices by $\mathcal{C}(n)$.

From now on, we shall always assume that $\Omega$ is a Reinhardt domain in $\mathbb{C}^2$ which is a unit ball with respect to some norm.
\section{Main Results}We begin the proof of \Cref{0} by the following series of lemmas.
\end{defn}
\begin{lem}\label{1}
Let $A$ be a complex positive semidefinite matrix and $A:(\mathbb{C}^2,\|\cdot\|_{\Omega})\mapsto(\mathbb{C}^2,\|\cdot\|_{\Omega})^*$, then $\|A\|_{op}=\sup_{(z_1,z_2)\in\Omega}\sum_{i,j=1}^2a_{ij}z_i\bar{z}_{j}$.
\end{lem}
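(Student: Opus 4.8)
The plan is to unwind the operator norm through the definition of the dual norm and then to exploit the positivity of $A$ via a Cauchy--Schwarz estimate, so that the two free vectors decouple.

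First I would write, straight from the definition of the operator norm and of the norm on $X^{*}:=(\mathbb{C}^2,\|\cdot\|_\Omega)^{*}$,
$$\|A\|_{op}=\sup_{\|z\|_\Omega\le 1}\|Az\|_{X^{*}}=\sup_{\|z\|_\Omega\le 1}\ \sup_{\|w\|_\Omega\le 1}\bigl|\langle Az,\,w\rangle\bigr|,$$
where $\langle Az,w\rangle$ is the action of the functional $Az\in X^{*}$ on $w\in X$. Under the identification of $\mathbb{C}^2$ with its functionals used here, this pairing is the sesquilinear form $B(z,w):=\sum_{i,j=1}^{2}a_{ij}z_i\bar w_j$, so that $\|A\|_{op}=\sup_{\|z\|_\Omega,\|w\|_\Omega\le1}|B(z,w)|$ and, crucially, its diagonal values are exactly $B(z,z)=\sum_{i,j}a_{ij}z_i\bar z_j$, the quantity on the right-hand side of the claim. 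Since $A$ is Hermitian, $B$ is a Hermitian form; since $A\ge O$, it is positive semidefinite, i.e.\ $B(z,z)\ge 0$ for every $z$.

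The heart of the matter is then the identity
$$\sup_{\|z\|_\Omega,\|w\|_\Omega\le1}\bigl|B(z,w)\bigr|=\sup_{\|z\|_\Omega\le1}B(z,z).$$
The inequality ``$\ge$'' is immediate by restricting to $w=z$. For ``$\le$'' I would invoke the Cauchy--Schwarz inequality valid for every positive semidefinite Hermitian sesquilinear form (proved in the usual way by expanding $B(z-tw,\,z-tw)\ge 0$ over $t\in\mathbb{C}$), namely $|B(z,w)|^{2}\le B(z,z)\,B(w,w)$. Hence for $\|z\|_\Omega,\|w\|_\Omega\le1$ one gets $|B(z,w)|\le\sqrt{B(z,z)\,B(w,w)}\le\sup_{\|u\|_\Omega\le1}B(u,u)$, and taking the supremum over $z,w$ yields ``$\le$''. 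Finally, since $Q(z):=\sum_{i,j}a_{ij}z_i\bar z_j$ is continuous and the closed unit ball is the closure of the open ball $\Omega$, the supremum of $Q$ over $\{\|z\|_\Omega\le1\}$ equals its supremum over $\Omega$; this identifies the right-hand side of the last display with $\sup_{(z_1,z_2)\in\Omega}\sum_{i,j}a_{ij}z_i\bar z_j$ and completes the proof.

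I expect the genuinely delicate points to be bookkeeping rather than depth: fixing the pairing convention so that $B(z,z)$ is literally $\sum_{i,j}a_{ij}z_i\bar z_j$ (the conjugate ambiguity affects only the off-diagonal coefficients, and $B(z,z)$ is real-valued in any case), and observing that the Cauchy--Schwarz reduction is insensitive to the non-Euclidean norm $\|\cdot\|_\Omega$. The latter is exactly where positivity of $A$ is used: it lets the two variables $z$ and $w$ be decoupled, so the general Reinhardt norm enters only through the separate constraints $\|z\|_\Omega\le1$ and $\|w\|_\Omega\le1$, and no Hilbert-space structure on $(\mathbb{C}^2,\|\cdot\|_\Omega)$ is needed.
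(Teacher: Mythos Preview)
Your proof is correct and follows essentially the same route as the paper: both unwind $\|A\|_{op}$ as $\sup_{X,Y\in\Omega}|\langle AX,Y\rangle|$ and then use Cauchy--Schwarz together with positivity of $A$ to reduce the double supremum to the single one $\sup_{X\in\Omega}\langle AX,X\rangle$. The only cosmetic difference is that the paper factors $A=B^{*}B$ via a positive square root and applies the standard $\ell_2$ Cauchy--Schwarz to $\langle BX,BY\rangle$, whereas you invoke Cauchy--Schwarz directly for the positive semidefinite form $B(z,w)$; your added remark on passing from the closed ball to the open ball $\Omega$ by continuity is a point the paper leaves implicit.
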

\begin{proof}
Applying the definition of the operator norm, one has 
\begin{align*}
\|A\|_{op}&= \ \ \sup_{X\in\Omega}\|AX\|_{{\Omega}^*}\\
&=\sup_{X\in\Omega,Y\in\Omega}|\langle AX,Y\rangle|.
\end{align*}
Using the fact that if $A\geq 0$, we can always finds a positive square root, say $B$ of $A$. Thus, we observe the following
 \begin{align}\label{f}
\sup_{X\in\Omega,Y\in\Omega}|\langle AX,Y\rangle|&=\sup_{X\in\Omega,Y\in\Omega}\langle BX,BY\rangle\\\nonumber
&\leq\sup_{X\in\Omega,Y\in\Omega}\|BX\|_2\|BY\|_2\\
&=\sup_{X\in\Omega}\|BX\|_2^2.\nonumber
\end{align}
In the last inequality, we have used Cauchy-Schwartz inequality. To complete the proof, we notice that one can always take $X=Y$ in \eqref{f}.
\end{proof}
\begin{lem}\label{2}
Let $(\mathbb{C}^2,\|\cdot\|_{\Omega})$ be a Banach space with the unit ball $\Omega,$ such that $\Omega$ is Reinhardt. Then, the unit ball of $(\mathbb{C}^2,\|\cdot\|_{\Omega})^*$ is again Reinhardt.
\end{lem}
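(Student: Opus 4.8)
The plan is to show that the unit ball of $(\mathbb{C}^2,\|\cdot\|_\Omega)^*$ is Reinhardt by proving the equivalent statement that the dual norm $\|\cdot\|_{\Omega^*}$ is invariant under the coordinate-wise torus action $(w_1,w_2)\mapsto(e^{i\phi_1}w_1,e^{i\phi_2}w_2)$. First I would record the reformulation of the hypothesis: saying that $\Omega$ is Reinhardt is the same as saying that for all real $\theta_1,\theta_2$ the map $T_\theta:(z_1,z_2)\mapsto(e^{i\theta_1}z_1,e^{i\theta_2}z_2)$ carries $\Omega$ bijectively onto itself, equivalently $\|T_\theta(z_1,z_2)\|_\Omega=\|(z_1,z_2)\|_\Omega$ for every $(z_1,z_2)$. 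The reduction itself is immediate, since a norm's unit ball is Reinhardt precisely when the norm (its Minkowski functional) is torus-invariant.

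Next I would write the dual norm as $\|(w_1,w_2)\|_{\Omega^*}=\sup_{\|(z_1,z_2)\|_\Omega\le 1}|z_1w_1+z_2w_2|$ and, for a rotated functional $w^\phi=(e^{i\phi_1}w_1,e^{i\phi_2}w_2)$, perform the change of variables $z_j\mapsto e^{-i\phi_j}z_j$ inside the supremum. Because $\Omega$ is Reinhardt, the substitution $T_{-\phi}$ is a bijection of the closed unit ball of $\|\cdot\|_\Omega$ onto itself, so the supremum is taken over the same set before and after; the phases $e^{i\phi_j}$ cancel against $e^{-i\phi_j}$ inside the pairing, and one is left with $\sup|z_1w_1+z_2w_2|=\|(w_1,w_2)\|_{\Omega^*}$. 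This yields $\|w^\phi\|_{\Omega^*}=\|(w_1,w_2)\|_{\Omega^*}$, which is exactly the desired invariance.

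I do not expect a genuine obstacle here; the entire content is the bijectivity of the torus action on $\Omega$ supplied by the hypothesis. The only point requiring minor care is the bookkeeping of the pairing convention (bilinear versus sesquilinear). Since the supremum defining $\|\cdot\|_{\Omega^*}$ involves only the modulus $|\cdot|$, the unimodular phases can be freely absorbed into the $z$-variables regardless of whether the pairing conjugates one factor, so the argument is insensitive to this choice and the conclusion follows in either convention.
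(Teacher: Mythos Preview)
Your proof is correct and follows the same idea as the paper's: both show torus-invariance of the dual norm by rotating the variable of the supremum and using that $\Omega$ is preserved under coordinate-wise rotations. The only difference is cosmetic: the paper proves $\|(z_1,z_2)\|_{\Omega^*}=\|(|z_1|,|z_2|)\|_{\Omega^*}$ via two separate inequalities (one by substitution, the other by an $\epsilon$-near-optimizer), whereas you obtain equality in one stroke by invoking that $T_{-\phi}$ is a bijection of the unit ball, which is a mild streamlining of the same argument.
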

\begin{proof} Given, $(z_1,z_2)$ in the dual unit ball of $(\mathbb{C}^2,\|\cdot\|_{\Omega})$, we observe the following
\begin{align*}
\|(z_1,z_2)\|_{\Omega^*}&=\sup_{(w_1,w_2)\in\Omega}|z_1\overline{w_1}+z_2\overline{w_2}|\\
&=\sup_{(w_1,w_2)\in\Omega}||z_1|\overline{e^{-i\text{arg}z_1}w_1}+|z_2|\overline{e^{-i\text{arg}z_2}w_2}|.
\end{align*}
Since $(e^{-i\text{arg}z_1}w_1,e^{-i\text{arg}z_2}w_2)\in\Omega$ as $\Omega$ is a Reinhardt domain, we get $$\|(z_1,z_2)\|_{\Omega^*}\leq\sup_{(w_1,w_2)\in\Omega}||z_1|\overline{w_1}+|z_2|\overline{w_2}|. $$ We pick an $\epsilon> 0$ and consider $(v_1,v_2)\in \Omega$ such that, $$||z_1|\overline{v_1}+|z_2|\overline{v_2}|\geq\sup_{(w_1,w_2)\in\Omega}  ||z_1|\overline{w_1}+|z_2|\overline{w_2}|-\epsilon.$$  Thus, we have $$|z_1\overline
{e^{i\text{arg}z_1}v_1}+z_2\overline
{e^{i\text{arg}z_2}v_2}|\geq\sup_{(w_1,w_2)\in\Omega}||z_1|\overline{w_1}+|z_2|\overline{w_2}|-\epsilon.$$ Since, $\Omega$ is Reinhardt, it immediately follows that 
$$\sup_{(w_1,w_2)\in\Omega}|z_1\overline{w_1}+z_2\overline{w_2}|\geq\sup_{(w_1,w_2)\in\Omega}||z_1|\overline{w_1}+|z_2|\overline{w_2}|-\epsilon.$$ 
Taking $\epsilon$ arbitraily close to zero, we get the desired result.
\end{proof}

\begin{lem}\label{3}
If $(\mathbb{C}^2,\|\cdot\|_{\Omega})$ is such that the unit ball $\Omega$ is Reinhardt then we have $\|(z_1,z_2)\|_{\Omega}=\|(|z_1|,|z_2|)\|_{|\Omega|}$.
\end{lem}
\begin{proof}
This clearly follows from \Cref{2}.
\end{proof}

\begin{lem}\label{4}
Let $A\geq 0$ be a complex matrix. Then \[ \|A\|_{((\mathbb{C}^2,\|\cdot\|_{\Omega})\rightarrow(\mathbb{C}^2,\|\cdot\|_{\Omega})^*)}=\||A|\|_{((\mathbb{C}^2,\|\cdot\|_{\Omega})\rightarrow(\mathbb{C}^2,\|\cdot\|_{\Omega})^*)}. \]
\end{lem}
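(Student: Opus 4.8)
The plan is to apply \Cref{1} to both $A$ and $|A|$---each is positive semidefinite, so the lemma applies to both---in order to rewrite the two operator norms as suprema of real quadratic forms over $\Omega$, and then to use the Reinhardt symmetry of $\Omega$ to optimize over the arguments of the coordinates. Concretely, with $A=\begin{pmatrix}a_{11}&a_{12}\\\overline{a_{12}}&a_{22}\end{pmatrix}$, \Cref{1} expresses $\|A\|_{op}$ as the supremum over $(z_1,z_2)\in\Omega$ of
\[
a_{11}|z_1|^2+a_{22}|z_2|^2+2\,\mathrm{Re}\bigl(a_{12}z_1\overline{z_2}\bigr),
\]
and $\||A|\|_{op}$ as the supremum of the same expression with $a_{12}$ replaced by the nonnegative number $|a_{12}|$ and $z_1\overline{z_2}$ replaced by $\mathrm{Re}(z_1\overline{z_2})$.

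First I would note that in each quadratic form the diagonal part $a_{11}|z_1|^2+a_{22}|z_2|^2$ depends only on the moduli $|z_1|,|z_2|$, so all the freedom in the arguments acts on the single cross term. Since $\Omega$ is Reinhardt, $(e^{i\alpha}z_1,e^{i\beta}z_2)\in\Omega$ whenever $(z_1,z_2)\in\Omega$; substituting this leaves the diagonal terms untouched and multiplies the cross term by a unimodular factor $e^{i(\alpha-\beta)}$. Choosing the phase difference to align the cross term with the positive real axis, both suprema reduce to the identical, phase-free quantity
\[
\sup_{(z_1,z_2)\in\Omega}\Bigl(a_{11}|z_1|^2+a_{22}|z_2|^2+2|a_{12}|\,|z_1|\,|z_2|\Bigr),
\]
which yields the claimed equality.

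I do not expect a genuine obstacle here; the one point needing care is the interchange that lets the optimization over arguments be carried out pointwise inside the supremum over $\Omega$, and this is precisely what the Reinhardt hypothesis guarantees. If one wishes to bypass the explicit phase bookkeeping, an alternative is to invoke \Cref{3}, which already records that $\|\cdot\|_\Omega$ sees only the moduli of the coordinates, and to transport the whole computation to the associated real space $(\mathbb{R}^2,\|\cdot\|_{|\Omega|})$, where the cross-term optimization is manifestly the same regardless of whether the off-diagonal entry is $a_{12}$ or its modulus.
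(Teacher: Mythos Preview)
Your proof is correct and follows essentially the same approach as the paper: both apply \Cref{1} to reduce to the quadratic form, then exploit the Reinhardt symmetry to rotate the phases so that the cross term $2\,\mathrm{Re}(a_{12}z_1\overline{z_2})$ attains its maximum $2|a_{12}|\,|z_1|\,|z_2|$. The paper packages the phase optimization as an $\epsilon$-argument (pick a near-maximizer for the modulus form, then rotate), whereas you do it by noting directly that the phase can be optimized pointwise inside the supremum; these are the same idea.
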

\begin{proof}
Adopting \Cref{1}, one obtains that
\begin{align*}
\|A\|_{((\mathbb{C}^2,\|\cdot\|_{\Omega})\rightarrow(\mathbb{C}^2,\|\cdot\|_{\Omega})^*)}&=\sup_{(z_1,z_2)\in\Omega}(a_{11}|z_1|^2+2Re(a_{12}z_1\overline{z_2})+a_{22}|z_2|^2).
\end{align*}

Define the follwing two quanities:
 $$M_1=\sup_{(z_1,z_2)\in\Omega}(a_{11}|z_1|^2+2Re(a_{12}z_1\overline{z_2})+a_{22}|z_2|^2)$$ and $$M_2=\sup_{(z_1,z_2)\in\Omega}(a_{11}|z_1|^2+2|a_{12}||z_1||z_2|+a_{22}|z_2|^2).$$

Note that $M_1\leq M_2$ holds trivially by the use of triangle inequality. Consider $(w_1,w_2)\in\Omega$, so that, $a_{11}|w_1|^2+2|a_{12}||w_1||w_2|+a_{22}|w_2|^2\geq M_2-\epsilon.$ Since, $\Omega$ is Reinhardt, we have that $(e^{i\text{arg}a_{12}}|w_1|,|w_2|)\in\Omega$. Hence, we obtain $$M_1\geq a_{11}|  e^{i\text{arg}a_{12}}|w_1||^2+2Re(a_{12}e^{i\text{arg}a_{12}}|w_1||w_2|)+a_{22}|w_2|^2\geq M_2-\epsilon.$$ The result follows by taking, $\epsilon\rightarrow 0$.
\end{proof}

\begin{lem}\label{5}
Let $A\geq 0$ be a real matrix and $\Omega$ is Reinhardt. Then, 
\[ \|A\|_{((\mathbb{R}^2,\|\cdot\|_{|\Omega|})\rightarrow(\mathbb{R}^2,\|\cdot\|_{|\Omega|})^*)}=\||A|\|_{((\mathbb{R}^2,\|\cdot\|_{|\Omega|})\rightarrow(\mathbb{R}^2,\|\cdot\|_{|\Omega|})^*)}. \]
\end{lem}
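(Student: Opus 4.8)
The plan is to prove Lemma~\ref{5} by mirroring the argument of Lemma~\ref{4}, since the real case is structurally identical but simpler because no complex phases intervene. First I would invoke the real analogue of Lemma~\ref{1}. For a real positive semidefinite matrix $A$ acting as $A:(\mathbb{R}^2,\|\cdot\|_{|\Omega|})\to(\mathbb{R}^2,\|\cdot\|_{|\Omega|})^*$, the identical computation using the positive square root $B$ of $A$ and the Cauchy--Schwarz inequality gives
\begin{equation*}
\|A\|_{op}=\sup_{(x,y)\in|\Omega|}\bigl(a_{11}x^2+2a_{12}xy+a_{22}y^2\bigr).
\end{equation*}
I would note that this real version of Lemma~\ref{1} holds by exactly the same proof, with $|\Omega|$ in place of $\Omega$ and real vectors $X=Y$ achieving the supremum.

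Next, paralleling Lemma~\ref{4}, I would define the two quantities
\begin{equation*}
M_1=\sup_{(x,y)\in|\Omega|}\bigl(a_{11}x^2+2a_{12}xy+a_{22}y^2\bigr),\qquad
M_2=\sup_{(x,y)\in|\Omega|}\bigl(a_{11}x^2+2|a_{12}|\,|x|\,|y|+a_{22}y^2\bigr),
\end{equation*}
so that $M_1=\|A\|_{op}$ and $M_2=\||A|\|_{op}$ by the displayed formula applied to $A$ and to $|A|$. The inequality $M_1\le M_2$ is immediate since $2a_{12}xy\le 2|a_{12}|\,|x|\,|y|$ pointwise. For the reverse inequality, given $\epsilon>0$ I would pick $(w_1,w_2)\in|\Omega|$ nearly attaining $M_2$, and then exploit that $|\Omega|$ is symmetric under sign changes of each coordinate: because $\Omega$ is Reinhardt, both $(\pm|w_1|,\pm|w_2|)$ lie in $|\Omega|$. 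Choosing the signs so that $a_{12}\cdot(\text{sgn of product})=|a_{12}|$, that is taking the point $(\operatorname{sgn}(a_{12})|w_1|,|w_2|)$, the cross term becomes $2a_{12}\operatorname{sgn}(a_{12})|w_1|\,|w_2|=2|a_{12}|\,|w_1|\,|w_2|$ while the diagonal terms are unchanged, yielding $M_1\ge M_2-\epsilon$. Letting $\epsilon\to 0$ gives $M_1\ge M_2$, hence $M_1=M_2$ and the lemma.

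The one point requiring care is justifying that $|\Omega|$ is invariant under coordinatewise sign changes, which replaces the Reinhardt rotation $e^{i\arg a_{12}}$ used in the complex Lemma~\ref{4} by the discrete sign $\operatorname{sgn}(a_{12})\in\{-1,+1\}$. This follows from Lemma~\ref{3}: since $\|(x,y)\|_{|\Omega|}=\|(|x|,|y|)\|_{|\Omega|}$ depends only on the absolute values of the coordinates, membership in $|\Omega|$ is unaffected by flipping either sign. I do not expect any genuine obstacle here; the real case is in fact easier than the complex one since one only needs to align a single sign rather than a continuous phase, and everything else transfers verbatim from the proof of Lemma~\ref{4}.
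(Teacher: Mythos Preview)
Your proposal is correct and follows essentially the same route as the paper: define the two suprema $M_1',M_2'$, note $M_1'\le M_2'$ trivially, then for the reverse inequality pick a near-maximizer $(x_0,y_0)$ for $M_2'$ and substitute $(\operatorname{sgn}a_{12}\,|x_0|,|y_0|)\in|\Omega|$ to align the cross term. Your explicit justification of the coordinatewise sign-invariance of $|\Omega|$ via Lemma~\ref{3} is a detail the paper leaves implicit, but otherwise the arguments coincide.
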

\begin{proof}
As in \Cref{4}, we define the following two quantities $$M_1^\prime = \sup_{\|(x,y)\|_{|\Omega|}\leq 1}(a_{11}x^2+2a_{12}xy+a_{22}y^2)$$ and $$M_2^\prime=\sup_{\|(x,y)\|_{|\Omega|}\leq 1}(a_{11}x^2+2|a_{12}||x||y|+a_{22}y^2).$$ Clearly, one has $M_1^\prime\leq M_2^\prime$. Now, exactly like in the proof of \Cref{4}, consider $(x_0,y_0)\in|\Omega|$ so that $$a_{11}{x_0}^2+2|a_{12}||x_0||y_0|+a_{22}{y_0}^2\geq M_2^\prime-\epsilon,$$ where $\epsilon>0$ is fixed but arbitrary. Rest of the proof follows similarly as of \Cref{4}, owing to the fact that if $(x_0,y_0)\in|\Omega|$, then one automatically has $(\text{sgn}a_{12}|x_0|,|y_0|)\in{|\Omega|}$. 
\end{proof}
\begin{lem}\label{6}
Let $A\geq 0$ be a complex matrix. Then,\[ \||A|\|_{((\mathbb{C}^2,\|\cdot\|_{\Omega})\rightarrow(\mathbb{C}^2,\|\cdot\|_{\Omega})^*)}=\||A|\|_{((\mathbb{R}^2,\|\cdot\|_{|\Omega|})\rightarrow(\mathbb{R}^2,\|\cdot\|_{|\Omega|})^*)}. \]
\end{lem}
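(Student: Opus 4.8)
The plan is to reduce each operator norm to the supremum of one and the same quadratic form over one and the same region, and then to identify the two regions using \Cref{3}.

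First I would compute the left-hand side. Since $|A|$ is a complex positive semidefinite matrix whose off-diagonal entry $|a_{12}|$ is already real and nonnegative, applying \Cref{1} to $|A|$ gives
\[
\||A|\|_{((\mathbb{C}^2,\|\cdot\|_{\Omega})\rightarrow(\mathbb{C}^2,\|\cdot\|_{\Omega})^*)}=\sup_{(z_1,z_2)\in\Omega}\left(a_{11}|z_1|^2+2|a_{12}|\,\mathrm{Re}(z_1\overline{z_2})+a_{22}|z_2|^2\right).
\]
Running the Reinhardt rotation argument from the proof of \Cref{4} on $|A|$ itself (for which the two quantities $M_1,M_2$ there now coincide), this supremum equals
\[
\sup_{(z_1,z_2)\in\Omega}\left(a_{11}|z_1|^2+2|a_{12}||z_1||z_2|+a_{22}|z_2|^2\right).
\]

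Next, for the right-hand side, I would use the real analogue of \Cref{1}, whose proof is identical with the Hermitian inner product replaced by the Euclidean one; this is precisely the expression $M_1^\prime$ appearing in \Cref{5}. Since $|A|$ is a real positive semidefinite matrix with nonnegative off-diagonal entry, \Cref{5} yields
\[
\||A|\|_{((\mathbb{R}^2,\|\cdot\|_{|\Omega|})\rightarrow(\mathbb{R}^2,\|\cdot\|_{|\Omega|})^*)}=\sup_{\|(x,y)\|_{|\Omega|}\leq1}\left(a_{11}x^2+2|a_{12}||x||y|+a_{22}y^2\right).
\]

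Finally I would match the two suprema. The complex objective depends on $(z_1,z_2)$ only through $(|z_1|,|z_2|)$, and by \Cref{3} the constraint $(z_1,z_2)\in\Omega$ is equivalent to $\|(|z_1|,|z_2|)\|_{|\Omega|}\leq1$; hence the left-hand supremum equals the supremum of $a_{11}s^2+2|a_{12}|st+a_{22}t^2$ over $s,t\geq0$ with $\|(s,t)\|_{|\Omega|}\leq1$. On the real side, because $\Omega$ is Reinhardt the ball $|\Omega|$ is invariant under the coordinate sign flips $(x,y)\mapsto(\pm x,\pm y)$, so $\|(x,y)\|_{|\Omega|}=\|(|x|,|y|)\|_{|\Omega|}$; since the real objective likewise depends only on $(|x|,|y|)$, its supremum over the unit ball equals its supremum over the nonnegative quadrant, which is exactly the same quantity. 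This gives the claimed equality. I expect the only genuine subtlety to be checking that $|\Omega|$ is symmetric under coordinate sign changes and that the nonnegative-quadrant traces of $\Omega$ and $|\Omega|$ coincide under the modulus map; everything else is a direct application of the preceding lemmas.
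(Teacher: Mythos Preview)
Your proposal is correct and follows essentially the same route as the paper's proof: both compute the complex norm via \Cref{1} and \Cref{4}, the real norm via \Cref{5}, and then identify the resulting suprema using \Cref{3} and the Reinhardt symmetry. Your write-up is simply more explicit about the sign-flip invariance of $|\Omega|$ and the passage to the nonnegative quadrant, which the paper leaves implicit.
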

\begin{proof}
By \Cref{1}, \Cref{2}, \Cref{3}, \Cref{4} and \Cref{5}, we obtain the following
\begin{align*}
\||A|\|_{((\mathbb{C}^2,\|\cdot\|_{\Omega})\rightarrow(\mathbb{C}^2,\|\cdot\|_{\Omega})^*)}&=\sup_{(z_1,z_2)\in\Omega}(a_{11}|z_1|^2+2|a_{12}||z_1||z_2|+a_{22}|z_2|^2)\\
&=\sup_{(|z_1|,|z_2|)\in\Omega}(a_{11}|z_1|^2+2|a_{12}||z_1||z_2|+a_{22}|z_2|^2)\\
&=\sup_{\|(x,y)\|_{|\Omega|}\leq 1}(a_{11}x^2+2|a_{12}||x||y|+a_{22}y^2)\\
&=\||A|\|_{((\mathbb{R}^2,\|\cdot\|_{|\Omega|})\rightarrow(\mathbb{R}^2,\|\cdot\|_{|\Omega|})^*)}.
\end{align*}
\end{proof}
We now turn our attention to prove \cref{0}.
\begin{proof}[Proof of \Cref{0}]
We observe the following
\begin{align*}
\gamma (( \mathbb{C}^2, &  \|\cdot\|_{\Omega}))\\
 & =  \sup\{\langle A,B\rangle :\|A
\|_{((\mathbb{C}^2,\|\cdot\|_{\Omega})\rightarrow(\mathbb{C}^2,\|\cdot\|_{\Omega})^*)}\leq 1,\|B\|_{((\mathbb{C}^2,\|\cdot\|_{\Omega}^*)\rightarrow(\mathbb{C}^2,\|\cdot\|_{\Omega}))}\leq 1,A\geq 0,B\geq 0\}\\
&\leq\sup\{\langle |A|,|B|\rangle :\|A
\|_{((\mathbb{C}^2,\|\cdot\|_{\Omega})\rightarrow(\mathbb{C}^2,\|\cdot\|_{\Omega})^*)}\leq 1,\|B\|_{((\mathbb{C}^2,\|\|_{\Omega}^*)\rightarrow(\mathbb{C}^2,\|\cdot\|_{\Omega}))}\leq 1,A\geq 0,B\geq 0\}.
\end{align*}
The last inequality is just the triangle inequality and in view of \cref{4}, it is clear that the inequality above is actually an equality. By a very similar argument as above and using \cref{5}, we also have 
\begin{align*}
\gamma & ((\mathbb{R}^2,  \|\cdot\|_{|\Omega|})) \\
 &  =  \sup\{\langle |C|,|D|\rangle :\|C
\|_{((\mathbb{R}^2,\|\|_{|\Omega|})\rightarrow(\mathbb{R}^2,\|\cdot\|_{|\Omega|})^*)}\leq 1,\|D\|_{((\mathbb{R}^2,\|\cdot\|_{|\Omega|})^*\rightarrow(\mathbb{R}^2,\|\cdot\|_{|\Omega|}))}\leq 1,C\geq 0,D\geq 0\}.
\end{align*}
Now, using the above and \Cref{6}, one readily sees that $\gamma((\mathbb{R}^2,\|\cdot\|_{|\Omega|}))=\gamma((\mathbb{C}^2,\|\cdot\|_{\Omega}))$.
\end{proof}
As an application of the above theorem and by Theorem 3.3 of \cite{AA}, one can easily observe that if $\Omega$ is of the form $\{(z_1,z_2):|z_1|^p+|z_2|^q<1\}$, where $p$ and $q$ are real numbers bigger than or equal to one and at least one of them is strictly bigger than one, then $(\mathbb{C}^2,\|\cdot\|_{\Omega})$ cannot have Property P, as $\overline{|\Omega|}=\{(x,y):|x|^p+|y|^q\leq 1,x,y\in\mathbb{R}\}$ has more than four extreme points but the unit ball of the real Banach space $(\mathbb{R}^2,\|\cdot\|_{\infty})$ has exactly four extreme points. This also produces simple proofs of some of the theorems proved in \cite{BG} and \cite{MAC}.

\textbf{Remark:} 
There is also an open problem which is very closely related to our problem. That is to determine Banach spaces which have unique operator space structure (see \cite{PG2}). However, in view of the chracterization of Thullen for Reinhardt domains in $\mathbb{C}^2$, we get a very large class of Banach spaces, which can be endowed with different operator space structures. We also suggests the reader \cite{MAC} for some recent progress.

The following theorem has been proved in \cite{AA} and \cite{BG}. However, our proof is much simpler in nature. The following theorem is a part of authors Master's thesis \cite{RKS}.

\begin{thm}The complex Banach spaces $(\mathbb{C}^2,\|\cdot\|_{\infty})$ and $(\mathbb{C}^3,\|\cdot\|_{\infty})$ have Property P.
\begin{proof}
Given, $A$ a complex $n\times n$ positive semi-definite matrix, we define $$\beta(A):=\sup_{B\in\mathcal{C}(n)}\langle A,B\rangle.$$ 
Note that, $\beta(A)=\sup_{\|x_i\|_2=1,\|y_j\|_2=1}|\sum_{i,j=1}^na_{ij}\langle x_i,y_j \rangle|$. This is because, if we use the techniques of \Cref{1}, it follows that $$\sup_{\|x_i\|_2=1}|\sum_{i,j=1}^na_{ij}\langle x_i,x_j \rangle|=\sup_{\|x_i\|_2=1,\|y_j\|_2=1}|\sum_{i,j=1}^na_{ij}\langle x_i,y_j \rangle|$$ and given any correlation matrix $C$, one can find $l_2$ unit vectors $x_i$'s such that $(\langle x_i,x_j\rangle )=C$ and  vise versa. Now, observing that the quantity $\langle A,B\rangle$ is linear in $B$ and $\mathcal{C}(n)$ is a compact convex set, we conclude that $\beta(A)=\sup_{B\in E(\mathcal{C}(n))}\langle A,B\rangle,$ where $E(\mathcal{C}(n))$ is the set of all extreme points of $\mathcal{C}(n)$. Since, all the elements of $E(\mathcal{C}(n))$ have ranks less than or equal to $\sqrt{n}$ (\cite{LCB}), in case, when $n=2,3$, we conclude that extreme correlation matrices have rank one. Now, if the correlation matrix $(\langle x_i,x_j\rangle )$ is of rank $1$, then $x_i$'s have to be one dimensional unit vectors. So for $n=2,3$, we obtain the following $$\beta(A)=\sup_{B\in E(\mathcal{C}(n))}\langle A,B\rangle=\sup_{\mid z_i\mid=1}\sum_{i,j=1}^{n}a_{ij}z_i\bar{z}_{j}=\|{A} \|_{(\mathbb{C}^n,\|\cdot\|_{\infty})\to(\mathbb{C}^n,\|\cdot\|_{1})}.$$ This proves the theorem.
\end{proof}
\end{thm}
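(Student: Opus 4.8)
The plan is to prove $\gamma\big((\mathbb{C}^n,\|\cdot\|_\infty)\big)=1$ for $n=2,3$; since $\gamma(X)\geq 1$ always (take $A=B$ to be a rank-one diagonal projection, for which both operator norms and $\langle A,B\rangle$ equal $1$), only the upper bound needs work. Write $X=(\mathbb{C}^n,\|\cdot\|_\infty)$, so that $X^*=(\mathbb{C}^n,\|\cdot\|_1)$. First I would compute, for positive semidefinite matrices, the two operator norms occurring in the definition of $\gamma$. By the square-root and Cauchy--Schwarz device of \Cref{1}, for $A\geq 0$ one gets $\|A\|_{X\to X^*}=\sup_{\|z\|_\infty\leq 1}\sum_{i,j}a_{ij}z_i\bar z_j$; since the quadratic form $\langle Az,z\rangle$ is convex in each coordinate on the closed disk, its maximum over the polydisk is attained on the torus, giving $\|A\|_{X\to X^*}=\sup_{|z_i|=1}\sum_{i,j}a_{ij}z_i\bar z_j$. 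For $B\geq 0$, a direct computation over the extreme points $\pm e^{i\theta}e_k$ of the $\ell_1$-ball gives $\|B\|_{X^*\to X}=\sup_{\|w\|_1\leq 1}\|Bw\|_\infty=\max_{i,k}|b_{ik}|=\max_i b_{ii}$, the last equality from $|b_{ik}|^2\leq b_{ii}b_{kk}$.

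With these two formulas in hand I would reduce $\gamma$ to the correlation-matrix functional $\beta(A)=\sup_{C\in\mathcal{C}(n)}\langle A,C\rangle$. Fixing a feasible $A$ and maximizing over $B$, the constraint $\|B\|_{X^*\to X}\leq 1$ becomes simply $b_{ii}\leq 1$; for such a $B$ the matrix $C:=B+\mathrm{diag}(1-b_{ii})$ is positive semidefinite with unit diagonal, hence a correlation matrix, and $\langle A,C-B\rangle=\sum_i a_{ii}(1-b_{ii})\geq 0$. Since correlation matrices are themselves feasible, this shows $\sup_{B\geq 0,\,\|B\|_{X^*\to X}\leq 1}\langle A,B\rangle=\beta(A)$, and therefore $\gamma(X)=\sup\{\beta(A):A\geq 0,\ \|A\|_{X\to X^*}\leq 1\}$. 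Thus everything comes down to comparing $\beta(A)$ with $\|A\|_{X\to X^*}$.

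The inequality $\|A\|_{X\to X^*}\leq\beta(A)$ is automatic, because the rank-one correlation matrices $(z_i\bar z_j)_{|z_i|=1}$ all lie in $\mathcal{C}(n)$. The crux, and the place where the restriction to $n=2,3$ enters, is the reverse inequality. Here I would use that $\langle A,\cdot\rangle$ is linear and $\mathcal{C}(n)$ is compact convex, so the supremum defining $\beta(A)$ is attained at an extreme point; rewriting a correlation matrix as a Gram matrix $(\langle x_i,x_j\rangle)$ of $\ell_2$ unit vectors, an extreme point has rank at most $\sqrt{n}$ by the rank bound of \cite{LCB}. For $n=2,3$ this forces rank one, i.e. $\langle x_i,x_j\rangle=z_i\bar z_j$ with $|z_i|=1$, whence $\beta(A)=\sup_{|z_i|=1}\sum_{i,j}a_{ij}z_i\bar z_j=\|A\|_{X\to X^*}$. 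Combining the three paragraphs yields $\gamma(X)=\sup\{\|A\|_{X\to X^*}:\|A\|_{X\to X^*}\leq 1\}=1$. I expect the main obstacle to be precisely this rank estimate for extreme correlation matrices: it is a genuinely dimension-sensitive fact, and it is the sole reason the argument halts at $n=3$, since for $n\geq 4$ extreme correlation matrices of rank $2$ exist and the identity $\beta(A)=\|A\|_{X\to X^*}$ can no longer be forced.
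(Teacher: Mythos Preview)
Your argument is correct and follows essentially the same route as the paper: the heart of both proofs is the identity $\beta(A)=\|A\|_{(\mathbb{C}^n,\|\cdot\|_\infty)\to(\mathbb{C}^n,\|\cdot\|_1)}$ for $n=2,3$, obtained by passing to extreme points of $\mathcal{C}(n)$ and invoking the rank bound from \cite{LCB}. The only difference is cosmetic: where the paper leaves the link between $\beta(A)$ and $\gamma(X)$ implicit, you make it explicit by computing $\|B\|_{X^*\to X}=\max_i b_{ii}$ and padding any feasible $B$ up to a correlation matrix, which is a clean and self-contained way to close that gap.
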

\textbf{Acknowledgement:} The author wants to express his sincere gratitude to his supervisor Prof. Parasar Mohanty, from whom he first came to know about the original problem, while learning the theory of Operator Spaces. He also thanks Prof. Gadadhar Misra for many helpful insights and Dr. Avijit Pal for patiently explaining his Ph.d thesis. In addition, he wants to convey his acknowledgement to Dr. Qaiser Jahan and Dr. Rajeev Gupta for many fruitful assistance.


\begin{thebibliography}{7}
\bibitem[AA]{AA} Arias, Alvaro, et al. \emph{Banach spaces with the $2$-summing property}. Transactions of the American Mathematical Society 347.10 (1995): 3835-3857.
\bibitem[BG]{BG}Bagchi, Bhaskar, and Gadadhar Misra. \emph{Contractive homomorphisms and tensor product norms.} Integral Equations and Operator Theory 21.3 (1995): 255-269.
\bibitem[LCB]{LCB} Li, Chi-Kwong, and Bit-Shun Tam. \emph{A note on extreme correlation matrices.} SIAM Journal on Matrix Analysis and Applications 15.3 (1994): 903-908.
\bibitem[MAC]{MAC} Misra, Gadadhar, Avijit Pal, and Cherian Varughese. \emph{Contractivity and complete contractivity for finite dimensional Banach Spaces.} arXiv preprint arXiv:1604.01872 (2016).
\bibitem[PG1]{PG1} Pisier, Gilles. \emph{Similarity problems and completely bounded maps}. Springer, 2013.
\bibitem[PG2]{PG2} Pisier, Gilles. \emph{Introduction to operator space theory}. Vol. 294. Cambridge University Press, 2003.
\bibitem[PG3]{PG3} Pisier, Gilles. \emph{Factorization of linear operators and geometry of Banach spaces.} Washington: Conference Board of the Mathematical Sciences, 1986.
\bibitem[PV1]{PV1} Paulsen, Vern. \emph{Completely bounded maps and operator algebras}. Vol. 78. Cambridge University Press, 2002.
\bibitem[PV2]{PV2} Paulsen, Vern I. \emph{Representations of function algebras, abstract operator spaces, and Banach space geometry.} Journal of functional analysis 109.1 (1992): 113-129.
\bibitem[RKS]{RKS}Ray, Samya Kumar. \emph{Grothendieck Inequality.} Diss. G25612, 2016.
\end{thebibliography}
\end{document}